\documentclass[11pt]{article}
 
\input{lebStuff_2}

\title{An Elementary Proof of the Dvoretzky--Kiefer--Wolfowitz--Massart Inequality}
\author{Jean-Yves Le Boudec\\EPFL\\jean-yves.leboudec@epfl.ch}

\date{\today}

\begin{document}

\maketitle

\sloppy
\begin{abstract} The Dvoretzky--Kiefer--Wolfowitz--Massart inequality gives an upper bound on the probability that the empirical distribution function of a finite sequence of independent random variables deviates from its theoretical value. It is widely used in statistics in tests and in the production of confidence intervals. The original proof by Massart, later reformulated by Dudley, is very long and technical. Recently, Reeve slightly extends Dvoretzky--Kiefer--Wolfowitz--Massart inequality and presents an alternative, much shorter proof, that uses a continuous time martingale. We present a simpler and less technical proof of the original Dvoretzky--Kiefer--Wolfowitz--Massart inequality, based on a discrete-time martingale and Sion's minimax theorem.
\end{abstract}

\section{Introduction}
Let $X_i$, $i=1:n$ be $n$ independent real-valued random variables with common distribution function $F$ and with empirical distribution function $\hat{F}(x)\eqdef \frac1n\sum_{i=1}^n \ind{X_i\leq x}$. The one-sided Dvoretzky--Kiefer--Wolfowitz--Massart inequality \cite{dvoretzky1956asymptotic,massart1990tight} states the following: 
\begin{theorem}
\be
\P\lp\sup_{x\in\Reals}\lp\hat{F}_n(x)-F(x)\rp> \varepsilon \rp \leq e^{-2n \varepsilon^2}, \forall \varepsilon >0
\label{eq:thm1}
\ee
\label{thm:1}
\end{theorem}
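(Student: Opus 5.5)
We reduce first to the uniform case and then bound a boundary-crossing probability for the order statistics, using a discrete-time exponential martingale and optional stopping.

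\emph{Reduction.} Write $X_i=F^{-1}(U_i)$ with $U_i$ i.i.d.\ uniform on $[0,1]$ and $F^{-1}$ the generalized inverse. Then $\{X_i\le x\}=\{U_i\le F(x)\}$, so $\hat F_n(x)-F(x)=\hat G_n(F(x))-F(x)$, where $\hat G_n$ is the empirical distribution function of the $U_i$. Hence $\sup_x(\hat F_n(x)-F(x))\le \sup_{t\in[0,1]}(\hat G_n(t)-t)$, and it suffices to prove \eqref{eq:thm1} for $F(t)=t$ on $[0,1]$. Let $U_{(1)}\le\dots\le U_{(n)}$ be the order statistics. Since $\hat G_n(t)-t$ is maximal just at the jump points, the event becomes
\[ A=\{\exists k\in\{1,\dots,n\}:\ U_{(k)}< k/n-\varepsilon\}. \]
So we must bound the probability that the discrete path $k\mapsto U_{(k)}$ crosses the line $k/n-\varepsilon$.

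\emph{Martingale.} The sequence $(U_{(k)})_k$ is a Markov chain. Given $U_{(k)}=u$, the next value $U_{(k+1)}$ has density $(n-k)(1-v)^{n-k-1}/(1-u)^{n-k}$ on $(u,1)$. I will look for a family of positive functions $f_k$ such that $Z_k=f_k(U_{(k)})$ is a martingale (or supermartingale). Natural candidates come from exponential tilts $e^{\lambda(k-nu)}$ corrected by the explicit normalizing integral of that density. Let $\tau$ be the first $k$ with $U_{(k)}<k/n-\varepsilon$. Optional stopping on the bounded time $\tau\wedge n$ then gives
\[ \P(A)\le \frac{\E[Z_0]}{\inf\{f_k(u): u<k/n-\varepsilon\}}. \]
The aim is to choose the tilt so that this ratio is at most $e^{-2n\varepsilon^2}$. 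Here $e^{-2n\varepsilon^2}$ is the Hoeffding-type exponent obtained by optimizing $\lambda$ for a single fixed $k$.

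\emph{Main obstacle.} A single $\lambda$ cannot be optimal simultaneously for all crossing indices $k$, so a naive union or single-tilt argument loses a constant. My plan is to allow a randomized, or mixed, choice of tilt: the martingale is an average $\int f^{\lambda}_k\,\mu(d\lambda)$ over a probability measure $\mu$. The quantity to control is then
\[ \min_{\mu}\ \max_{\nu}\ \text{(ratio)}, \]
where $\nu$ ranges over distributions of the crossing state $(k,u)$. The payoff is linear in $\mu$ and, after taking logs or working with the expected bound, convex/concave in the appropriate arguments over compact convex sets. Sion's minimax theorem then lets us swap to $\max_\nu\min_\mu$. For each fixed crossing distribution $\nu$ the optimal tilt can be computed explicitly, and a one-variable calculus inequality should reduce the bound to $e^{-2n\varepsilon^2}$.

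I expect the hardest step to be this last inequality: verifying, uniformly in $k$ and $n$, that the optimized single-point bound really yields the sharp constant $1$ rather than some $C>1$. I would first try to reduce it to the binomial tail estimate at the crossing point and compare with the Gaussian-type exponent directly.
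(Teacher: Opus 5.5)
Your reduction to the uniform case, the rewriting of the event as $\{\exists k:\ U_{(k)}<k/n-\varepsilon\}$, and the optional-stopping inequality for a nonnegative martingale are all fine. The gap is that the quantitative core is never supplied, and the route you sketch would not supply it.

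First, you never exhibit a martingale whose value on the crossing set you can compute. The tilt $e^{\lambda(k-nU_{(k)})}$ is not a martingale for the order-statistic chain, because $\espc{e^{-\lambda n U_{(k+1)}}}{U_{(k)}=u}$ is not proportional to $e^{-\lambda n u}$. ``Correcting by the normalizing integral'' then gives either a path-dependent product of conditional moment generating functions or a backward recursion with no closed form. Either way, $\inf\{f_k(u):u<k/n-\varepsilon\}$ is out of reach. The paper's key device is to index time by the threshold rather than by $k$. Set $u_i=i/n-\varepsilon$ and $N_i=\#\{j:U_j<u_i\}\sim\mbox{Binomial}(n,u_i)$. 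Binomial thinning ($N_i$ given $N_{i+1}$ is $\mbox{Binomial}(N_{i+1},u_i/u_{i+1})$) makes $M_i=(1+\lambda/u_i)^{N_i}$ a reverse martingale with mean $(1+\lambda)^n$. Moreover $\{N_i\ge i\}=\{M_i\ge(1+\lambda/u_i)^i\}$ for $i\ge i^*$, so Doob's inequality gives the explicit bound $\log q\le\inf_{\lambda>0}\max_{i}g(i,\lambda)$, with $g$ as in \eqref{eq:def_g}.

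Second, your ``main obstacle'' is misdiagnosed, and the mixed-strategy fix does not reduce to what you claim. With this martingale a single tilt loses no constant. After relaxing $i$ to $x\in[i^*,n]$, the map $x\mapsto g(x,\lambda)$ is quasi-concave and $\lambda\mapsto g(x,\lambda)$ is quasi-convex. Sion's theorem therefore applies to pure strategies, giving $\inf_\lambda\max_x g=\max_x\inf_\lambda g$: the uniform bound equals the worst (relaxed) single-index Chernoff bound. In your bilinear game, Sion instead produces $\inf_\nu\sup_\lambda\int f^\lambda\,d\nu$, and for non-degenerate $\nu$ this is not a single-point computation. In general $\sup_\lambda\int f^\lambda\,d\nu$ is strictly smaller than $\int\sup_\lambda f^\lambda\,d\nu$, so pointwise Chernoff bounds do not control it. Showing that point masses are the worst case is exactly the quasi-concavity/quasi-convexity property you have not established. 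So your claim that ``for each fixed $\nu$ the optimal tilt can be computed explicitly'' is unsupported.

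Finally, the sharp constant is not a delicate calculus step once the saddle-point identity is in hand. The infimum over $\lambda$ equals $-n\,\mathrm{kl}(x/n,\,x/n-\varepsilon)$, attained at $\lambda^*(x)=\varepsilon n/(n-x)$ for $x<n$. Pinsker's inequality $\mathrm{kl}(p+\varepsilon,p)\ge 2\varepsilon^2$ then finishes. Your proposal leaves both of these steps open.
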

\noindent 
The two-sided Dvoretzky--Kiefer--Wolfowitz--Massart inequality states that
\be  \P\lp\sup_{x\in\Reals}\abs{\hat{F}_n(x)-F(x)}> \varepsilon \rp \leq 2 e^{-2n \varepsilon^2},
\forall \varepsilon >0
\label{eq:thm1bis}
\ee
It can be derived from \eqref{eq:thm1} using the union bound.

In \cite{massart1990tight}, Massart proved \eqref{eq:thm1} for $\varepsilon>\sqrt{\frac{\log 2}{2n}}$ and \eqref{eq:thm1bis} for any $\varepsilon >0$. In \cite{reeve2024short}, Reeve extended the validity of \eqref{eq:thm1} to any $\varepsilon >0$. Note that if $\varepsilon\geq 1$ then the left-handsides in \eqref{eq:thm1} and \eqref{eq:thm1bis} are $0$ and the inequalities trivially hold, hence we assume in the rest that $0<\varepsilon <1$.

The proofs of the original results in \cite{dvoretzky1956asymptotic,massart1990tight} are very long and technical. Dudley \cite[Chapter 1]{dudley2014uniform} provided a more systematic proof, yet it remains very complex and technical. Reeve \cite{reeve2024short} provided a radically different proof of \eqref{eq:thm1}, using a martingale approach. The proof by Reeve removes the constraint $\varepsilon>\sqrt{\frac{\log 2}{2n}}$; it also generalizes the result to a bound on $\P\lp\sup_{x\in \mathcal{I}}\lp\hat{F}_n(x)-F(x)\rp> \varepsilon \rp $ where $\calI$ is any interval of $\Reals$
(instead of the entire $\Reals$), 
but with this generalization the bound is no longer explicit.

While considerably shorter, the proof by Reeve of \eqref{eq:thm1} is still very technical, and can be replaced by a much more elementary one, as we now describe.

%
%
\section{Proof of \thref{thm:1}}
%
%
\subsection{Reduction to the Uniform Case}

\begin{lemma}  
It is sufficient to establish \eqref{eq:thm1} in the case where the common distribution of $X_i$'s is uniform on $[0,1]$.
\end{lemma}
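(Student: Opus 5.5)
The plan is the standard quantile-transform coupling. Let $U_1,\dots,U_n$ be i.i.d.\ uniform on $[0,1]$, let $G(u)=u$ be their distribution function and $\hat{G}_n$ their empirical distribution function. Define the generalized inverse $F^{-1}(u)\eqdef\inf\{x\in\Reals: F(x)\geq u\}$ for $u\in(0,1)$. Since the $U_i$ are almost surely in $(0,1)$, we may set $Y_i\eqdef F^{-1}(U_i)$. The first step is to record the Galois-type equivalence
\[
F^{-1}(u)\leq x \iff u\leq F(x),
\]
valid for $u\in(0,1)$ and $x\in\Reals$. It follows from the right-continuity and monotonicity of $F$: the set $\{x: F(x)\geq u\}$ is a closed half-line $[F^{-1}(u),\infty)$. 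Consequently $\P(Y_i\leq x)=\P(U_i\leq F(x))=F(x)$. Hence the $Y_i$ are i.i.d.\ with distribution function $F$, and the vector $(Y_1,\dots,Y_n)$ has the same law as $(X_1,\dots,X_n)$.

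The second step is to express the empirical process of the $Y_i$ through that of the $U_i$. By the same equivalence, for every $x$,
\[
\frac1n\sum_{i=1}^n \ind{Y_i\leq x}=\frac1n\sum_{i=1}^n \ind{U_i\leq F(x)}=\hat{G}_n(F(x)),
\]
so that $\hat{F}^Y_n(x)-F(x)=\hat{G}_n(F(x))-G(F(x))$. Taking the supremum over $x$ gives
\[
\sup_{x\in\Reals}\lp \hat{F}^Y_n(x)-F(x)\rp=\sup_{t\in F(\Reals)}\lp\hat{G}_n(t)-t\rp\leq \sup_{t\in\Reals}\lp\hat{G}_n(t)-G(t)\rp.
\]
The left-hand side is a function of $(Y_1,\dots,Y_n)$, so it has the same distribution as $\sup_x(\hat{F}_n(x)-F(x))$.

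Therefore
\[
\P\lp\sup_x(\hat{F}_n(x)-F(x))>\varepsilon\rp\leq \P\lp\sup_t(\hat{G}_n(t)-t)>\varepsilon\rp.
\]
The uniform case of \eqref{eq:thm1} then bounds the right-hand side by $e^{-2n\varepsilon^2}$, which proves the lemma.

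The only delicate point is the equivalence $F^{-1}(u)\leq x\iff u\leq F(x)$ when $F$ has atoms or flat pieces. The direction ($\Leftarrow$) is immediate from the definition of the infimum. For ($\Rightarrow$), take $x_k\downarrow F^{-1}(u)$ with $F(x_k)\geq u$; right-continuity gives $F(F^{-1}(u))\geq u$, and monotonicity then gives $F(x)\geq u$ for all $x\geq F^{-1}(u)$.

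A secondary point is measurability of the supremum. Because the empirical process is right-continuous with left limits and $F$ is monotone, the supremum can be taken over a countable set, namely the rationals together with the finitely many sample points. So all the events involved are measurable, and no continuity assumption on $F$ is needed, since we only use an inequality between the suprema.
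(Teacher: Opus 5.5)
Your proof is correct and is the standard quantile-transform reduction that the paper delegates to Dudley (p.~39), including the correct handling of atoms and flat pieces of $F$ through the inequality between suprema. One cosmetic point: in your final display the supremum should be over $t\in[0,1]$, as in the paper's definition of $q$, since $\hat{G}_n(t)-t$ is unbounded for $t<0$; your preceding inequality already gives this because $F(\Reals)\subseteq[0,1]$.
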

\noindent\textbf{Proof.} See \cite[page 39]{dudley2014uniform}. 
\qed

Let $U_i$, $i=1:n$ be independent random variabiles whose common distribution is uniform on $[0,1]$, and let $\hat{U}_n(t)=\frac1n\sum_{i=1}^n\ind{U_i\leq t}$ be the empirical distribution function. Define
\be
q\eqdef\P\lp\sup_{t\in[0,1]}\lp \hat{U}_n(t)-t\rp > \varepsilon \rp 
\ee
The rest of the proof consists in showing that 
\be
q\leq e^{-2n \varepsilon^2}
\label{theo:dkw}
\ee

\subsection{A Discrete-Time Martingale Approach}
Let $i^*=\floor{n \varepsilon}+1$; note that $i^*\leq n$ because $\varepsilon < 1$. Also let
the function $g(x,\lambda)$ be defined for $x\in \Reals$, $x\geq i^*$ and $\lambda>0$ by
\be
g(x,\lambda)\eqdef n \log(1+\lambda) - x \log\lp1 + \frac{ \lambda}{\frac{x}{n}-\varepsilon}\rp
\label{eq:def_g}
\ee
\begin{lemma}  
\bear
\log q &\leq& 
\inf_{\lambda >0}\max_{x\in [ i^*, n]} g(x,\lambda)
\eqdef \log \tilde{q}
\label{eq:qtilde}
\eear  
\end{lemma}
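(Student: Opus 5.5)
The plan is to build, for each fixed $\lambda>0$, a nonnegative discrete-time (reverse) martingale from the counting process $N(t)\eqdef n\hat{U}_n(t)$. I will then apply optional stopping at a suitable stopping time and a Markov-type bound, and finally optimise over $\lambda$.

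\textbf{Step 1: reduce to a crossing event on a grid.} Let $U_{(1)}\le\dots\le U_{(n)}$ be the order statistics. The supremum of $\hat U_n(t)-t$ is approached at the points $t=U_{(i)}$, where $\hat U_n=i/n$. Hence the event $\{\sup_t(\hat U_n(t)-t)>\varepsilon\}$ equals $\{\exists i: U_{(i)}<i/n-\varepsilon\}$. This forces $i/n>\varepsilon$, that is, $i\ge i^*$. With thresholds $s_k\eqdef k/n-\varepsilon\in(0,1)$ for $k=i^*,\dots,n$, the event becomes $\{\exists k\in\{i^*,\dots,n\}: N(s_k)\ge k\}$, since $U_{(k)}<s_k$ is equivalent to $N(s_k)\ge k$ (the distribution is continuous, so ties are null).

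\textbf{Step 2: the martingale.} For $0<t<s\le1$, conditionally on the points above time $s$ (reverse filtration $\mathcal G_s=\sigma(N(u),u\ge s)$), $N(t)$ is Binomial$(N(s),t/s)$. Therefore, for $a>0$,
\[
\E\left[a^{N(t)}\mid\mathcal G_s\right]=\left(1-\tfrac ts+a\tfrac ts\right)^{N(s)} .
\]
Taking $a=1+\lambda/t$ gives exactly $(1+\lambda/s)^{N(s)}$. So $M(t)\eqdef(1+\lambda/t)^{N(t)}$ is a nonnegative martingale as $t$ decreases. In particular, $M_k\eqdef M(s_k)$, indexed by $k=n,n-1,\dots,i^*$, is a discrete-time martingale with respect to $\mathcal G_{s_k}$. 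Moreover, since $N(1)=n$,
\[
\E[M_n]=\E[M(1)]=(1+\lambda)^n .
\]

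\textbf{Step 3: optional stopping.} Let $T$ be the first $k$ in the decreasing scan $n,n-1,\dots,i^*$ with $N(s_k)\ge k$, and set $T=i^*$ if there is none. This is a bounded stopping time for the reverse filtration, so $\E[M_T]=(1+\lambda)^n$. On the event of Step 1 we have
\[
M_T\ge(1+\lambda/s_T)^T\ge\min_{k}\exp\!\left(k\log\!\left(1+\tfrac{\lambda}{k/n-\varepsilon}\right)\right),
\]
and $M_T\ge0$ always. Markov's inequality then gives
\[
q\le(1+\lambda)^n\max_{k\in\{i^*,\dots,n\}}\left(1+\tfrac{\lambda}{k/n-\varepsilon}\right)^{-k}=\exp\max_k g(k,\lambda).
\]
This is at most $\exp\max_{x\in[i^*,n]}g(x,\lambda)$. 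Taking logarithms and the infimum over $\lambda>0$ yields \eqref{eq:qtilde}.

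\textbf{Main obstacle.} The main difficulty is Step 2: finding the right exponential martingale whose parameter $a=1+\lambda/t$ varies with time, so that the conditional binomial computation telescopes exactly. I would verify that identity carefully. I would also check that the filtration handles the discrete grid $s_k$ correctly, given that all $s_k>0$ because $k\ge i^*>n\varepsilon$. The remaining steps are routine.
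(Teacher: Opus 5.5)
Your proof is correct and is essentially the paper's: the same reverse martingale $M_i=(1+\lambda/u_i)^{N_i}$ on the grid $u_i=i/n-\varepsilon$, $i\ge i^*$, justified by the same conditional $\mathrm{Binomial}(N_{i+1},u_i/u_{i+1})$ computation, and the same device of replacing the varying thresholds $\{N_i\ge i\}$ by the single level $\min_i (1+\lambda/u_i)^i$. The only differences are cosmetic. You obtain the maximal bound by optional stopping plus Markov's inequality instead of citing Doob's inequality. You also get the mean $(1+\lambda)^n$ by conditioning back to $t=1$ (where $N(1)=n$) rather than from the $\mathrm{Binomial}(n,u_i)$ law of $N_i$.
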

\noindent\textbf{Proof.}
From the definition of $\hat{U}_n$ it follows that
\be
\lp\sup_{t\in[0,1]}\lp\hat{U}_n(t)-t\rp> \varepsilon \rp \Leftrightarrow 
\lp\exists i \in \lc 1,...,n\rc, U_{(i)}<\frac{i}{n}-\varepsilon\rp
\label{eq:un}
\ee
For $i=0...n$, let $u_i\eqdef \frac{i}{n}-\varepsilon$, so that \eqref{eq:un} implies $q= \P\lp
\bigcup_{i=0}^{n} \lp U_{(i)}< u_i\rp
\rp$. 
For every integer $i=0...n$ define the integer-valued random variable $N_i$ by $N_i=\sum_{k=1}^{n}\ind{U_k<u_i}$ where $\ind{U_k<u_i}=1$ if $U_k<u_i$ and $0$ otherwise. Thus:
$
\lp N_i \geq i\rp 
\Leftrightarrow 
\lp U_{(i)}<u_i\rp
$. 
Observe that $u_i\leq 0$ for $i< i^*$ and $u_i>0$ for $i\geq i^*$, thus:
\be
q= \P\lp
\bigcup_{i=i^*}^{n} \lp U_{(i)}< u_i\rp
\rp
=
 \P\lp
\bigcup_{i=i^*}^{n} \lp N_i \geq i\rp
\rp
\ee
Let $\lambda$ be a positive constant to be optimized upon. Define
\bear
M_{i}&=&\lp 1 +\frac{\lambda}{u_i} \rp^{N_i}  \mfor i={i^*} ... n
\eear
The distribution of $N_i$ is $\mbox{Binomial}(n,u_i)$ therefore
\bear
\esp{M_{i}}&=&\lp1-u_i + u_i \lp 1 +\frac{\lambda}{u_i} \rp\rp ^n
=(1+\lambda)^{n}
\eear
Let $\calF_i$ be the sigma-field generated by $N_{i}, N_{i+1}, ..., N_{n}$. 
The conditional distribution of $N_i$ given $\calF_{i+1}$ depends only on $N_{i+1}$ 
and the conditional distribution of $N_i$ given $\lp N_{i+1}=k\rp$ is $\mbox{Binomial}(k,\frac{u_i}{u_{i+1}})$, thus
\be
\espc{M_i}{\calF_{i+1}}=\lp 1-\frac{u_i}{u_{i+1}} + \frac{u_i}{u_{i+1}}\lp 1 +\frac{\lambda}{u_i}\rp \rp^{N_{i+1}}=M_{i+1}
\ee
hence $(M_i)_{i={i^*}...n}$ is a reverse martingale. It follows, by Doob's martingale inequality:
\bear
\P\lp \bigcup_{i=i^*}^{n} \lp M_i\geq \delta\rp\rp\leq \frac{\esp{M_{i^*}}}{\delta}=\frac{(1+\lambda)^{n}}{\delta}
\eear
Now
$
\lc M_i\geq\delta \rc
=
\lc
N_i \geq \frac{\log\delta}{\log\lp1 + \frac{\lambda}{u_i}\rp}
\rc
$. 
Let 
$\log \delta^*(\lambda)\eqdef \min_{i\in \ib{i^*,n}} \lp i \log\lp1 + \frac{\lambda}{u_i}\rp\rp
$, where $\ib{i^*,n}$ denotes the set of integers $j$ such that $i^*\leq j\leq n$. It comes:
\be
\lc N_i \geq i\rc\subseteq \lc N_i \geq  \frac{\log\delta^*(\lambda)}{\log\lp1 + 
\frac{\lambda}{u_i}\rp}\rc
=\lc M_i \geq \delta^*(\lambda) \rc
, \mfor i\in \ib{i^*,n}
\ee
and thus
\bear
q &\leq & \P\lp \bigcup_{i  = i^*}^{n} \lp M_i\geq \delta^*(\lambda)\rp\rp\leq \frac{\esp{M_{i^*}}}{\delta^*(\lambda)}= \frac{(1+\lambda)^{n}}{\delta^*(\lambda)} 
\eear
and thus
\bear
\log q &\leq& \inf_{\lambda >0}\lp n\log(1+\lambda)-\min_{i\in \ib{{i^*},n}} \lp i \log\lp1 + \frac{\lambda}{u_i}\rp\rp\rp \\
&=&\inf_{\lambda >0}\max_{i\in \ib{{i^*},n}}\lp n\log(1+\lambda)-  i \log\lp1 + \frac{\lambda}{\frac{i}{n}-\varepsilon}\rp\rp 
\eear
Now relax the integrality condition on $i$ (changing $i$ to $x$) and obtain \eqref{eq:qtilde}.
The existence of a $\max$ over $x\in\lb i^*, n \rb$ follows from the continuity with respect to $x$  and the fact that $i^* >n\varepsilon$. \qed

%
%
\subsection{A Saddle-Point Approach}
\begin{lemma}For every $\lambda>0$, the function $x\mapsto g(x,\lambda)$, defined in \eqref{eq:def_g} for $x \in [i^*,n]$, is quasi-concave.
\label{lem:qcave}
\end{lemma}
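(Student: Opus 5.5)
The plan is to pass to the negation and show that $x\mapsto h(x)\eqdef x\log\lp1+\frac{\lambda}{\frac{x}{n}-\varepsilon}\rp$ is quasi-convex on $[i^*,n]$. Since $n\log(1+\lambda)$ does not depend on $x$, this makes $g(\cdot,\lambda)=n\log(1+\lambda)-h$ quasi-concave. For a differentiable function of one variable it suffices to show that $h'$ changes sign at most once, and only from negative to positive. Then $h$ is nonincreasing and afterwards nondecreasing, so its sublevel sets are intervals.

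To simplify the algebra, change variables to $y\eqdef x-n\varepsilon$, which is $>0$ on $[i^*,n]$ because $i^*>n\varepsilon$. Put $a\eqdef n\varepsilon\geq 0$ and $c\eqdef \lambda n>0$. Then $h=(y+a)\varphi(y)$ with $\varphi(y)=\log(1+c/y)$ and $\varphi'(y)=-\frac{c}{y(y+c)}<0$. Hence
\be
h'=\varphi+(y+a)\varphi' = (-\varphi')\lp \psi(y)-y-a\rp, \qquad \psi(y)\eqdef \frac{\varphi(y)}{-\varphi'(y)}=\frac{y(y+c)}{c}\log\lp1+\frac{c}{y}\rp .
\ee
Since $-\varphi'>0$, the sign of $h'$ is the sign of $\psi(y)-y-a$. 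It therefore suffices to prove that $y\mapsto \psi(y)-y$ is increasing on $y>0$. Then $\psi(y)-y-a$ crosses zero at most once, from below to above, which is exactly the required sign pattern.

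The key step, and the only real computation, is this monotonicity. Substitute $t=c/y$, so that $t$ is decreasing in $y$. This gives $\psi(y)-y = c\,k(t)$ with
\be
k(t)\eqdef\frac{(1+t)\log(1+t)-t}{t^2}.
\ee
So we need $k$ to be decreasing on $t>0$. Writing $L=\log(1+t)$, the derivative of the numerator of $k$ is $L$, and a direct computation gives
\be
k'(t)=\frac{2t-(2+t)L}{t^3}.
\ee
Thus $k'<0$ is equivalent to the classical inequality $\log(1+t)>\frac{2t}{2+t}$ for $t>0$. To prove it, note that both sides vanish at $t=0$, and the derivative of the difference is $\frac{1}{1+t}-\frac{4}{(2+t)^2}=\frac{t^2}{(1+t)(2+t)^2}>0$.

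I expect the main obstacle to be finding the right factorization of $h'$. A direct attempt to prove convexity of $h$ fails, because the term $y\varphi$ is actually concave: one finds $2\varphi'+y\varphi''=-\frac{c^2}{y(y+c)^2}<0$. This is why the argument goes through the sign of $h'$ via the ratio $\psi$ rather than through $h''$. The remaining verifications are routine.
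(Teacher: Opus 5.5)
Your proof is correct, and it takes a different route from the paper's. The paper works with $h(y)=(y+e)\log\frac{y}{y+\ell}$, which is your $-H$ with $e=a$ and $\ell=c$. It computes $h''(y)=-\frac{\ell}{y^2(y+\ell)^2}\bigl(y(2e-\ell)+e\ell\bigr)$ and splits into two cases:
\begin{itemize}
\item if $\ell\le 2e$, then $h$ is concave;
\item otherwise $h''$ changes sign once, from negative to positive, and combining this with the limits $h'(0^+)=+\infty$ and $h'(+\infty)=0$ shows that $h'$ is first positive and then negative.
\end{itemize}
In the paper the two case conditions are printed the wrong way round: with $\ell<2e$, the point $y^*=e\ell/(\ell-2e)$ would be negative.

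You stay at the level of the first derivative. Your factorization $H'=(-\varphi')(\psi-y-a)$ isolates the parameter $a=n\varepsilon$ as an additive constant. After the rescaling $t=c/y$, the single-crossing property reduces to the monotonicity of one parameter-free function $k$, which is equivalent to the classical bound $\log(1+t)>2t/(2+t)$. This gives an argument that is uniform in $\lambda$ and $\varepsilon$, with no case split and no asymptotics of $h'$.

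If you add the limits $k(0^+)=1/2$ and $k(+\infty)=0$, you recover the paper's threshold. On $(0,+\infty)$, $\psi-y-a$ increases strictly from $-a$ to $c/2-a$, so a turning point exists iff $\lambda>2\varepsilon$. That is exactly the regime in which the paper's $h$ fails to be concave.

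In exchange, the paper's route gives the extra fact that $g(\cdot,\lambda)$ is genuinely concave when $\lambda\le 2\varepsilon$. That fact is not needed for Sion's theorem, which only uses quasi-concavity.
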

\noindent\textbf{Proof.} 
Let $e = \varepsilon n$, $\ell = \lambda n$ and $h$ be the function $(0,+\infty)\to \Reals$ such that $h(y)=(y+e)\log y -\log (y+\ell)$. We have $g(x,\lambda)=n \log(1+\lambda)+h(x- \varepsilon n)$. We show next that $h$ is quasi-concave, which will obviously imply that its restriction to $[i^*-\varepsilon n, (1-\varepsilon) n)]$ also is, and thus that $x \mapsto g(x,\lambda)$ is quasi-concave.

By direct computation we find 
\bear
h'(y)
&=&
-  \log\lp y + \ell\rp +  \log y -\frac{y+e}{y + \ell}+\frac{y+e}{y}
\\
h''(y)&=&
-\frac{\ell}{y^2(y+\ell)^2}
\lp
y(2e-\ell) + e\ell
\rp 
\label{eq:der2g}
\eear 
Case 1: $\ell \geq 2e$. Since $y,e,\ell\geq 0$ it follows that $h''(y)\leq 0$ hence $h$ is concave, hence quasi-concave.

\noindent Case 2: $\ell < 2e$. Let $y^*=\frac{e\ell}{\ell-2 e}$, so that $h''(y)<0$ for $0<y<y^*$, $h''(y^*)=0$ and $h''(y)>0 $ for $y>y^*$. Note that 
\bear
\limit{y}{0} h'(y)&=& -  \log \ell -\frac{e}{\ell} +\limit{y}{0} \lp \frac1y\lp y \log y +y+ e\rp 
\rp = +\infty\\
\limit{y}{+\infty} h'(y)&=& \limit{y}{+\infty} \lp \log\frac{y}{ y + \ell}  -\frac{y+e}{y + \ell}+\frac{y+e}{y}\rp=0
\eear
It follows that $h'$ decreases from $+\infty$ to $h(y^*)$ for $y\in (0,y^*]$ and increases from $h(y^*) $ to $0$ for $y \in [y^*,+\infty)$. Thus $h'(y^*)<0$. It follows that there exists some $y^{**}\in (0,y^*)$ such that $h'(y^{**})=0$, $h'(y)>0$ for $y \in (0, y^{**})$ and  $h'(y)<0$ for $y \in (y^{**}, +\infty)$. Thus $h$ is increasing on $(0, y^{**})$ and decreasing on $(y^{**}, +\infty)$, hence is quasi-concave. \qed

\begin{lemma}For every $x \in [i^*,n]$, the function $\lambda\mapsto g(x,\lambda)$, defined in \eqref{eq:def_g} for $\lambda>0$  is quasi-convex.
\label{lem:qcvex}
\end{lemma}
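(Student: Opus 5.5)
Fix $x\in[i^*,n]$ and write $a\eqdef \frac{x}{n}-\varepsilon$. Since $x\geq i^*>n\varepsilon$, we have $a>0$, and since $x\leq n$ we have $a\leq 1-\varepsilon<1$. Then
$$g(x,\lambda)=n\log(1+\lambda)-x\log\lp1+\frac{\lambda}{a}\rp .$$
I plan to show that $\lambda\mapsto g(x,\lambda)$ has a derivative which, on $(0,+\infty)$, is either of constant sign or changes sign exactly once, from negative to positive. Either way the function is monotone, or decreasing and then increasing. In every case it is quasi-convex, because every sublevel set is then an interval.

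The derivative is
$$\frac{\partial g}{\partial\lambda}(x,\lambda)=\frac{n}{1+\lambda}-\frac{x}{a+\lambda}
=\frac{n(a+\lambda)-x(1+\lambda)}{(1+\lambda)(a+\lambda)}.$$
The denominator is positive for $\lambda>0$. The numerator is affine in $\lambda$:
$$n(a+\lambda)-x(1+\lambda)=(n-x)\lambda+(na-x)=(n-x)\lambda-n\varepsilon,$$
where we used $na=x-n\varepsilon$.

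Two cases arise.
\begin{itemize}
\item If $x=n$, the numerator equals the constant $-n\varepsilon<0$. Then $g(x,\cdot)$ is decreasing, hence quasi-convex.
\item If $x<n$, the numerator is strictly increasing in $\lambda$. It is negative for $\lambda<\lambda_0\eqdef\frac{n\varepsilon}{n-x}$ and positive for $\lambda>\lambda_0$. So $g(x,\cdot)$ decreases on $(0,\lambda_0]$ and increases on $[\lambda_0,+\infty)$.
\end{itemize}

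To conclude, let $c\in\Reals$. In the second case, monotonicity on each side of $\lambda_0$ shows that $\{\lambda>0: g(x,\lambda)\leq c\}$ is an interval (possibly empty). In the first case the set is a half-line. Hence all sublevel sets are convex, which is quasi-convexity.

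The only step needing care is simplifying the numerator to $(n-x)\lambda-n\varepsilon$, since that is what guarantees a single sign change. This is a straightforward computation, so I expect no real obstacle. As an alternative, one could note that $g(x,\cdot)$ is in fact convex when $x\le n$, but the sign argument above is sufficient and simpler.
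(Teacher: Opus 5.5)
Your proof is correct and follows the paper's argument. It uses the same computation of $\frac{\partial g}{\partial\lambda}$ with numerator $(n-x)\lambda-n\varepsilon$, the same case split at $x=n$, and the same turning point $\frac{\varepsilon n}{n-x}$. One caveat: your closing aside is false, because $g(x,\cdot)$ is \emph{not} convex for $x<n$. Indeed $\frac{\partial^2 g}{\partial\lambda^2}(x,\lambda)=\frac{x}{(a+\lambda)^2}-\frac{n}{(1+\lambda)^2}$ becomes negative for large $\lambda$, and the function grows only like $(n-x)\log\lambda$. So quasi-convexity really is the right notion here, and that remark should be dropped.
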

\noindent\textbf{Proof.} 
 \bear\frac{\partial g}{\partial \lambda}(x,\lambda)
 &=&
 \frac{n}{1+\lambda}
 - \frac{xn}{x- \varepsilon n + \lambda n}
 =n\lp\frac{\lambda(n-x)-\varepsilon n}{(1+\lambda)(x- \varepsilon n + \lambda n)}\rp
 \eear

 Recall that $n\varepsilon<i^*\leq x\leq n$. For $x\in \lb i^*, n \rp$, let 
 $\lambda^*(x)\eqdef\frac{\varepsilon n}{n-x}$, so that  
  the map $\lambda \mapsto g(x,\lambda)$ is decreasing for $0< \lambda\leq\lambda^*(x)$ and increasing for $\lambda\geq\lambda^*(x)$, and is thus quasi-convex. 
For $x=n$, $\frac{\partial g}{\partial \lambda}(x,\lambda)<0$ for every $\lambda>0$, the map $\lambda \mapsto g(x,\lambda)$ is decreasing and is thus quasi-convex. 
\qed

\begin{lemma}
The function $g$ defined in \eqref{eq:def_g} satisfies the saddle-point property, namely, $\inf_{\lambda >0}\max_{x\in \lb i^*, n \rb} g(x,\lambda)=\max_{x\in \lb i^*, n \rb}\inf_{\lambda >0} g(x,\lambda)$. 
\label{lemma:saddle}
\end{lemma}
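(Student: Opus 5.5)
We will obtain the saddle-point property from Sion's minimax theorem, using Lemma~\ref{lem:qcave} and Lemma~\ref{lem:qcvex} for the quasi-concavity/quasi-convexity hypotheses. Sion's theorem states the following. Let $X$ be a compact convex subset of a topological vector space, and $Y$ a convex subset. Let $f:X\times Y\to\Reals$ be such that $f(\cdot,y)$ is upper semicontinuous and quasi-concave for every $y$, and $f(x,\cdot)$ is lower semicontinuous and quasi-convex for every $x$. Then $\inf_{y\in Y}\sup_{x\in X} f(x,y)=\sup_{x\in X}\inf_{y\in Y} f(x,y)$.

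We apply it with $X=\lb i^*,n\rb$ and $Y=(0,+\infty)$. The set $X$ is compact and convex in $\Reals$, because $i^*\leq n$. The set $Y$ is convex; it is not compact, but Sion's theorem only needs compactness of one of the two sets.

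The hypotheses are checked as follows.
\begin{itemize}
\item Quasi-concavity of $x\mapsto g(x,\lambda)$ on $X$ is Lemma~\ref{lem:qcave}.
\item Quasi-convexity of $\lambda\mapsto g(x,\lambda)$ on $Y$ is Lemma~\ref{lem:qcvex}.
\item For continuity, note that for $x\in X$ we have $\frac{x}{n}-\varepsilon\geq \frac{i^*}{n}-\varepsilon>0$, since $i^*>n\varepsilon$. Hence for $\lambda>0$ the argument $1+\frac{\lambda}{\frac{x}{n}-\varepsilon}$ of the logarithm is bounded away from $0$ uniformly in $x$. So $g$ is jointly continuous on $X\times Y$, and in particular separately continuous, which is more than the semicontinuity required.
\end{itemize}
Sion's theorem then gives
\[
\inf_{\lambda>0}\sup_{x\in X} g(x,\lambda)=\sup_{x\in X}\inf_{\lambda>0} g(x,\lambda).
\]

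It remains to replace the two suprema over $x$ by maxima, as in the statement.
\begin{itemize}
\item On the left, $\sup_x g(x,\lambda)$ is attained for each fixed $\lambda$, since $g(\cdot,\lambda)$ is continuous on the compact set $X$.
\item On the right, $\phi(x)\eqdef\inf_{\lambda>0}g(x,\lambda)$ is an infimum of continuous functions of $x$, hence upper semicontinuous. Since $\phi$ is bounded above by $g(x,1)$, it attains its supremum on the compact set $X$, possibly with value $-\infty$.
\end{itemize}

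I expect this last point to be the main obstacle. The value at $x=n$ illustrates why: by Lemma~\ref{lem:qcvex}, $g(n,\cdot)$ is decreasing, so $\phi(n)$ is a limit as $\lambda\to\infty$. Computing it, $n\log(1+\lambda)-n\log\bigl(1+\frac{\lambda}{1-\varepsilon}\bigr)\to n\log(1-\varepsilon)$, which is finite. For $x<n$ we can compute $\phi(x)=g(x,\lambda^*(x))$ explicitly, and this formula shows $\phi$ is finite and continuous on $[i^*,n)$.

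This lets us avoid invoking upper semicontinuity of $\phi$ abstractly. We will instead check directly that $\phi(x)\to n\log(1-\varepsilon)=\phi(n)$ as $x\to n$. Then $\phi$ is continuous on the compact set $X$, so its supremum is a maximum. This completes the proof of Lemma~\ref{lemma:saddle}.
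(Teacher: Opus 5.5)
Your proposal is correct and matches the paper's proof. Both apply Sion's minimax theorem with $X=\lb i^*,n\rb$ compact and convex and $\Lambda=(0,+\infty)$ convex, using Lemmas~\ref{lem:qcave} and \ref{lem:qcvex} and the continuity of $g$; the paper uses Komiya's form applied to $-g$, which yields the $\max$ over $x$ directly in its conclusion. Your upper-semicontinuity argument already shows that $\sup_x \inf_{\lambda} g(x,\lambda)$ is attained, so the planned explicit check that $\phi$ is continuous at $x=n$ is redundant rather than an obstacle.
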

\noindent\textbf{Proof.} Apply Sion's minimax theorem \cite{sion1958general,komiya1988elementary} to $f=-g$. The theorem in \cite{komiya1988elementary} applies to a function $f: X\times \Lambda \to \Reals$ such that (1) $X$ is convex and compact and $\Lambda$ is convex, (2) $x\mapsto f(x,\lambda)$ is quasi-convex and upper-semi-continuous and (3) $\lambda\mapsto f(x,\lambda)$ quasi-concave and lower-semi-continuous. Here, the assumptions hold because of Lemmas~\ref{lem:qcave} and \ref{lem:qcvex}, because $g$ is continuous, $X=[i^*,n]$ is convex and compact and $\Lambda=(0, +\infty)$ is convex. 
\qed

In the following lemma, we use the binary relative entropy function, or binary Kullback-Leibler divergence, defined for $p\in [0,1], q\in(0,1)$ by $\mathrm{kl}(p,q)=p\log\frac{p}{q}+(1-p)\log\frac{1-p}{1-q}$ for $0<p<1$, for $p=0$ by $\mathrm{kl}(0,q)=\log\frac{1}{1-q}$ and for $p=1$ by $\mathrm{kl}(1,q)=\log\frac{1}{q}$.

\begin{lemma}The bound $\tilde{q}$ defined in \eqref{eq:qtilde} satisfies
\be
\log \tilde{q}\leq -n\min_{p\in [0, 1-\varepsilon]}\mathrm{kl}(p+\varepsilon, p) 
\ee
\label{lemma:kl}
\end{lemma}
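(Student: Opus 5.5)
By Lemma~\ref{lemma:saddle}, $\log\tilde q=\max_{x\in[i^*,n]}\inf_{\lambda>0} g(x,\lambda)$. The plan is to compute the inner infimum in closed form for each fixed $x$, and then to recognise it as a binary relative entropy. Throughout, write $x=n(p+\varepsilon)$. As $x$ ranges over $[i^*,n]$, $p=\frac{x}{n}-\varepsilon$ ranges over $[\frac{i^*}{n}-\varepsilon,\,1-\varepsilon]$. This is a subset of $(0,1-\varepsilon]$, since $i^*>n\varepsilon$.

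\textbf{Case $x<n$.} By the proof of Lemma~\ref{lem:qcvex}, the map $\lambda\mapsto g(x,\lambda)$ is decreasing and then increasing, with minimum at $\lambda^*(x)=\frac{\varepsilon n}{n-x}=\frac{\varepsilon}{1-p-\varepsilon}$. Substituting gives
\[
1+\lambda^*=\frac{1-p}{1-p-\varepsilon},\qquad 1+\frac{\lambda^*}{p}=\frac{p+\varepsilon}{p}\cdot\frac{1-p}{1-p-\varepsilon}.
\]
Hence
\[
\inf_{\lambda>0}g(x,\lambda)=n\log\frac{1-p}{1-p-\varepsilon}-n(p+\varepsilon)\Bigl[\log\frac{p+\varepsilon}{p}+\log\frac{1-p}{1-p-\varepsilon}\Bigr].
\]
Collecting terms, this equals
\[
-n\Bigl[(p+\varepsilon)\log\frac{p+\varepsilon}{p}+(1-p-\varepsilon)\log\frac{1-p-\varepsilon}{1-p}\Bigr]=-n\,\mathrm{kl}(p+\varepsilon,p).
\]
This is the routine algebra step. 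The only point needing care is the sign bookkeeping when collecting the $\log\frac{1-p}{1-p-\varepsilon}$ terms, which come with total coefficient $n(1-p-\varepsilon)$.

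\textbf{Case $x=n$.} Here $p=1-\varepsilon$ and $g(n,\cdot)$ is decreasing. The infimum is therefore the limit as $\lambda\to\infty$ of
\[
n\log(1+\lambda)-n\log\Bigl(1+\frac{\lambda}{1-\varepsilon}\Bigr),
\]
which equals $n\log(1-\varepsilon)=-n\,\mathrm{kl}(1,1-\varepsilon)$. This agrees with the stated convention for $\mathrm{kl}(1,q)$.

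Combining the two cases, $\log\tilde q=\max_{p}\bigl(-n\,\mathrm{kl}(p+\varepsilon,p)\bigr)$, with the maximum over $p\in[\frac{i^*}{n}-\varepsilon,1-\varepsilon]$. Enlarging the range of $p$ to $[0,1-\varepsilon]$ can only increase this maximum, which gives the claimed bound.

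The endpoint $p=0$ needs a convention for $\mathrm{kl}(\varepsilon,0)$; interpreting it as $+\infty$ is harmless, because that point cannot attain the maximum of $-n\,\mathrm{kl}$. Equivalently, the minimum of $\mathrm{kl}$ over the closed interval is attained in the interior or at $1-\varepsilon$, since $\mathrm{kl}(p+\varepsilon,p)\to\infty$ as $p\to 0$.

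I expect no real obstacle. The main things to check are the closed-form evaluation at $\lambda^*$ and the boundary case $x=n$.
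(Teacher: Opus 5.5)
Your proof is correct and follows the paper's argument step for step. You invoke Lemma~\ref{lemma:saddle}, evaluate $\inf_{\lambda>0}g(x,\lambda)$ at $\lambda^*(x)$ for $x<n$ and as the limit $\lambda\to\infty$ for $x=n$, identify the result as $-n\,\mathrm{kl}(x/n,x/n-\varepsilon)$, and then enlarge the range of $p$. Your substitution $x=n(p+\varepsilon)$ makes the algebra slightly cleaner, and your remark that $\mathrm{kl}(\varepsilon,0)$ should be read as $+\infty$ at the endpoint $p=0$ handles a convention the paper leaves implicit.
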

\noindent\textbf{Proof.} Apply \lref{lemma:saddle} 
and compute $g^*(x)\eqdef\inf_{\lambda >0} g(x,\lambda)$. 
For $x\in \lb i^*, n \rp$, we have obtained that $g(x,.)$ has a minimum at $\lambda=\lambda^*(x)$ hence 
\bear
g^*(x)&=&g(x,\lambda^*(x))
= n \log\lp 1+ \frac{\varepsilon n}{n-x}\rp - x \log\lp
1+ \frac{\frac{\varepsilon n}{n-x}}{\frac{x}{n}-\varepsilon}\rp 
\\
\frac{g^*(x)}{n}
&=&
\log\lp 1-\frac{x}{n}+\varepsilon \rp -\log \lp 1-\frac{x}{n}\rp \nonumber\ \\
&&+ \frac{x}{n} \log\lp \frac{x}{n}-\varepsilon \rp +\frac{x}{n} \log \lp 1 - \frac{x}{n} \rp - \frac{x}{n} \log \frac{x}{n} -\frac{x}{n} \log \lp 1-\frac{x}{n} + \varepsilon\rp\\
&=&-\frac{x}{n}\log\frac{\frac{x}{n}}{\frac{x}{n}-\varepsilon} - \lp 1- \frac{x}{n} \rp \log\frac{1-\frac{x}{n}}{1-\frac{x}{n}+\varepsilon}=-\mathrm{kl}\lp\frac{x}{n},\frac{x}{n}-\varepsilon\rp
\eear

For $x=n$, $g(x,.)$ is decreasing and 
\bear
g^*(n)&=&\limit{\lambda}{\infty}g(n,\lambda)=\limit{\lambda}{\infty}n\log\frac{1+\lambda}{1+\frac{\lambda}{1-\varepsilon}}=n\log (1-\varepsilon)=-n \;\mathrm{kl}(1, 1-\varepsilon)
\eear
In summary, we have
\be
g^*(x)=-n\;\mathrm{kl}\lp\frac{x}{n},\frac{x}{n}-\varepsilon\rp \mfor x\in \lb i^*, n \rb
\ee
Hence
\be
\log \tilde{q}= - n \min_{x \in [i^*,n]}\mathrm{kl}\lp\frac{x}{n},\frac{x}{n}-\varepsilon\rp \leq -n\min_{p\in [0, 1-\varepsilon]}\mathrm{kl}(p+\varepsilon, p) 
\ee
\qed
%
%
\subsection{Proof of \thref{thm:1}}

By Pinsker's inequality \cite{yeung2008information}, $\mathrm{kl}(p+\varepsilon, p) \geq 2 \varepsilon^2$ hence
\be
\log \tilde{q}\leq -2n\varepsilon^2 
\ee
\qed


\bibliographystyle{unsrt}

\end{document}